\documentclass[a4paper,  leqno,  myheadings,  11pt,  twoside]{article}
\usepackage{mathrsfs}

\usepackage{amsmath,  amsthm,  amssymb,  amscd,  amsxtra, graphicx}
\usepackage{latexsym,  amsfonts}

\setlength{\textheight}{215 mm} \setlength{\textwidth}{145 mm}
\setlength{\topmargin}{4 mm}
 \setlength{\oddsidemargin}{9 mm}
\setlength{\evensidemargin}{9 mm} \pagestyle{myheadings}

\DeclareMathOperator*{\esssup}{esssup}

\newcommand{\norm}[1]{\lVert#1\rVert}
\newcommand{\lnorm}[1]{{\lVert#1\rVert}_\infty}
\def\qc{quasi\-conformal }

\def\mcn{\mathcal{N}}
\def\md{\Delta}
\def\mc{\mathbb{C}}

\def\msu{\mathscr{U}}

\def\T{Teich\-m\"ul\-ler }

\def\bmu{\emu_Z}

\def\nmu{\|\mu\|_\infty}
\def\nzmu{\|\zmu\|}

\def\nmu{\|\mu\|_\infty}

\def\wt{\widetilde}
\def\vp{\varphi}

\def\vp{\varphi}

\def\ov{\overline}

\def\de{\md}
\def\q1s{Q^1(S)}

\def\ts{T(S)}

\def\zs{Z(S)}

\def\tde{T(\de)}

\def\emu{[\mu]}

\def\emb{[\mu]_Z}

\def\zmu{[\mu]_Z}

 \makeatletter
\@addtoreset{equation}{section}

\newtheorem{theorem}{Theorem}
\newtheorem{cor}{Corollary}[section]
\newtheorem{lemma}{Lemma}[section]

\newtheorem{theo}{Theorem}

\newtheorem{prop}{Proposition}[section]
\newtheorem*{prob}{Problem}

\newtheorem{rem}{Remark}

\markboth{\centerline{GUOWU YAO}}{\centerline{Infinitesimally extremal Beltrami differentials of non-landslide type}}

\begin{document}

\title{\bf{Infinitesimally extremal Beltrami differentials of non-landslide type}
\author{GUOWU YAO
}}
\date{}
 \date{July 23,  2015}
\maketitle
\begin{abstract}\noindent
In this paper,  it is shown that there are  infinitely many
extremal Beltrami differentials of non-landside type and non-constant modulus in an infinitesimal
equivalence class unless  the class contains a unique extremal.

\end{abstract}
\renewcommand{\thefootnote}{}

\footnote{{2010 \it{Mathematics Subject Classification.}} Primary
30C75,  30C62.} \footnote{{\it{Key words and phrases.}} \T space,
infinitesimally extremal, non-landslide extremal.}

 \footnote{The   author was
supported by the National Natural Science Foundation of China
(Grant No. 11271216).}

\begin{centering}\section{\!\!\!\!\!{. }
Introduction}\label{S:intr}\end{centering}


 Let $S$ be a plane domain with at least two boundary points.  The \T space $\ts$ is the space of
equivalence classes of \qc maps $f$ from $S$ to a variable domain
$f(S)$. Two \qc maps $f$ from $S$ to $f(S)$ and $g$ from $S$ to
$g(S)$ are equivalent if there is a conformal map $c$ from $f(S)$
onto $g(S)$ and a homotopy through \qc maps $h_t$ mapping $S$ onto
$g(S)$ such that $h_0=c\circ f$, $h_1=g$ and $h_t(p)=c\circ
f(p)=g(p)$ for every $t\in [0,1]$ and every $p$ in the boundary of
$S$. Denote by $[f]$  the \T equivalence class of $f$; also
sometimes denote the equivalence class by $\emu$ where $\mu$ is the
Beltrami differential of $f$.

Denote by $Bel(S)$ the Banach space of Beltrami differentials
$\mu=\mu(z)d\bar z/dz$ on $S$ with finite $L^{\infty}$-norm and by
$M(S)$ the open unit ball in $Bel(S)$.

For $\mu\in M(S)$, define
\begin{equation*}
k_0(\emu)=\inf\{\|\nu\|_\infty:\,\nu\in\emu\}.
\end{equation*}
We say that $\mu$ is extremal  in $\emu$
  if $\nmu=k_0(\emu)$, and uniquely extremal if $\|\nu\|_\infty>k_0(\mu)$ for any other
$\nu\in\emu$.

The cotangent space to $\ts$ at the basepoint is the Banach space
$Q(S)$ of integrable holomorphic quadratic differentials on $S$ with
$L^1-$norm
\begin{equation*}
\|\vp\|=\iint_{S}|\vp(z)|\, dxdy<\infty.
 \end{equation*}  In what follows,  let $\q1s$
denote the unit sphere of $Q(S)$.

 Two Beltrami differentials $\mu$ and $\nu$ in $Bel(S)$ are said to
be infinitesimally equivalent if
\begin{equation*}\iint_S(\mu-\nu)\vp \, dxdy=0,  \text{
for any } \vp\in Q(S).
\end{equation*}
The tangent space $\zs$ of $\ts$ at the basepoint is defined as  the quotient space of $Bel(S)$ under the equivalence
relation.  Denote by $\emb$ the equivalence class of $\mu$ in
$\zs$.  In particular, we use $\mcn(S)$ to denote the set of Beltrami differentials in $Bel(S)$ that is equivalent to 0.

$\zs$ is a Banach space and  its standard
sup-norm satisfies
\begin{equation*}
\|\emb\|=\norm{\mu}:=\sup_{\vp\in \q1s}Re\iint_S \mu\vp \,
dxdy=\inf\{\|\nu\|_\infty:\,\nu\in\emb\}.\end{equation*}
 We say that $\mu$ is extremal  (in $\emb$) if $\nmu=\|\emb\|$,
uniquely extremal if $\|\nu\|_\infty>\nmu$ for any other $\nu\in
\emb$.

A Beltrami differential $\mu$ in $Bel(S)$ is  said to be
of landslide type if there exists a non-empty open subset $E\subset S$ such
that
\begin{equation*}\esssup_{z\in
E}|\mu(z)|<\|\mu\|_\infty;\end{equation*} otherwise, $\mu$ is said
to be of non-landslide type.

The conception of ``non-landslide"  was firstly introduced by Li in \cite{Li5} for extremal Beltrami differentials. Here, we generalize the definition for general case.
In particular, a unique extremal is
naturally of non-landslide type.

Let $\de$ be the unit disk in the complex plane. In \cite{Li5}, Z. Li
 investigated non-uniqueness of extremal Beltrami differentials of
non-landslide type in a \T equivalence class in the universal \T space $\tde$ and proved the
following theorem.
\begin{theo}\label{Th:li5}
There is a  point $\emu$ in $\tde$ such that $\emu$ contains
infinitely many extremal Beltrami differentials of non-landslide
type with non-constant modulus.
\end{theo}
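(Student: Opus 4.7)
The plan is to find a class in $\tde$ whose boundary dilatation equals its extremal dilatation (a non-Strebel class) and to produce an infinite family of extremal representatives by modifying the extremal on compactly contained subdomains, each carefully shaped to be of non-landslide type with non-constant modulus.

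First, I would construct $\mu_0 \in M(\de)$ whose class $[\mu_0]$ satisfies $k_0([\mu_0]) = h^*([\mu_0]) = k$ for some $k \in (0,1)$, where $h^*$ denotes the boundary dilatation. A concrete recipe: choose a holomorphic quadratic differential $\vp_0$ on an annulus $A$ collared to $\pa\de$ whose $L^1$-mass concentrates at $\pa\de$, set $\mu_0 = k\, \overline{\vp_0}/|\vp_0|$ on $A$, and extend $\mu_0$ to $\de \setminus A$ with $\|\mu_0\|_\infty = k$ in a pattern of non-constant modulus. A Hamilton sequence $\vp_n = \vp_0 \cdot \chi_{A_n}$ with annuli $A_n \subset A$ shrinking to $\pa\de$ then simultaneously witnesses the extremality of $\mu_0$ and the equality $h^*([\mu_0]) = k$.

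Next, I would use the fact that when $h^*([\mu_0]) = k = k_0([\mu_0])$, any representative $\nu$ of $[\mu_0]$ with $\|\nu\|_\infty \le k$ is automatically extremal (by the Reich--Strebel main inequality). New representatives can be generated by a compact modification: for any compactly contained subdomain $\Omega_n \Subset \de$, let $\wt\mu_n$ be a Beltrami coefficient on $\Omega_n$ with $\|\wt\mu_n\|_\infty \le k$ whose induced \qc map has the same boundary values on $\pa\Omega_n$ as the \qc map induced by $\mu_0$. The glued differential---equal to $\mu_0$ outside $\Omega_n$ and to $\wt\mu_n$ inside---then lies in $[\mu_0]$ and, by the observation above, is extremal.

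The main obstacle is to choose the $\wt\mu_n$ so that each resulting $\mu_n$ is simultaneously of non-landslide type, of non-constant modulus, and pairwise distinct. For non-landslide I would fix a countable dense family of round subdisks $\{D_j\}$ of $\de$ and build $\mu_0$ so that $|\mu_0| = k$ on a positive-measure subset of every $D_j$; the $\Omega_n$ are chosen to leave a dense skeleton of this set intact, and inside $\Omega_n$ the replacement $\wt\mu_n$ is designed so that $|\wt\mu_n| = k$ on a dense-in-$\Omega_n$ positive-measure subset as well. For non-constant modulus, $\wt\mu_n$ is chosen to satisfy $|\wt\mu_n| < k$ on a set of positive measure in $\Omega_n$. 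To produce infinitely many distinct extremals, the $\Omega_n$ are taken pairwise disjoint and the $\wt\mu_n$ essentially different. The delicate point is that the conditions $h^*([\mu_0]) = k$, density of $\{|\mu_n| = k\}$, and non-constant modulus pull in opposite directions---classical non-Strebel examples have $|\mu_0| \equiv k$, violating non-constancy---so the construction requires a careful design of $\mu_0$ off the annular collar that exhibits non-constant modulus while still supporting the extremal Hamilton sequence; this is the technical heart of Li's construction in \cite{Li5}.
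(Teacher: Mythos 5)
First, a point of comparison: the paper does not prove Theorem \ref{Th:li5} at all --- it is quoted as Li's result from \cite{Li5}, and the paper's own constructions (Lemmas \ref{Th:reich}--\ref{Th:lempseudo} and \ref{Th:nclass}) are built for the \emph{infinitesimal} analogue, where triviality of a local modification is a linear condition ($\iint \chi\vp=0$ for all $\vp\in Q(B_n)$) that Reich's extension theorem and the explicit family $kz^n/|z|^n$ handle completely. Your proposal works in the genuinely harder global setting, where a modification on $\Omega_n$ must reproduce the \emph{boundary values} of a quasiconformal map on $\pa\Omega_n$, and so it cannot be judged against a proof in this paper; it has to stand on its own.

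It does not yet stand on its own: the step you label ``the technical heart'' is the entire content of the theorem, and the surrounding scaffolding does not reduce it to anything easier. Two concrete issues. (i) The appeal to the Reich--Strebel inequality is vacuous where you invoke it: if $\nu\in\emu$ and $\lnorm{\nu}\le k=k_0(\emu)$, then $\lnorm{\nu}=k$ and $\nu$ is extremal by the definition of $k_0$ as an infimum over the class --- no frame condition needed. The condition $h^*(\emu)=k_0(\emu)$ is actually needed for a different and unproved step, namely the \emph{existence} of a competitor $\wt\mu_n\neq\mu_0|_{\Omega_n}$ on $\Omega_n$ with the same boundary values and $\lnorm{\wt\mu_n}\le k$; this fails whenever $\mu_0|_{\Omega_n}$ is uniquely extremal for its own boundary-value problem at level $k$, and nothing in your setup rules that out. (ii) The non-landslide requirement is that $\esssup_E|\mu_n|=k$ for \emph{every} non-empty open $E\subset\de$, which removes exactly the slack that (i) requires: on any subdisk of $\Omega_n$ the local boundary-value problem is already being solved with dilatation essentially $k$, so post-composition arguments that perturb the map while keeping the norm $\le k$ have no room to operate. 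Your ``dense skeleton'' device only relocates the problem, since the replacement must then live on an open set whose closure meets the skeleton, and you never address how to prescribe its boundary values there. Until a base point $\mu_0$ and modifications $\wt\mu_n$ are exhibited that satisfy the frame condition, the pointwise triviality on $\pa\Omega_n$, the norm bound, density of $\{|\mu_n|=k\}$, and non-constancy of $|\mu_n|$ simultaneously, this is a plausible programme rather than a proof.
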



In \cite{Li5}, Li  posed the following problem.

\begin{prob}
For any given $\emu$, is there always an extremal Beltrami differential $\wt \mu$ in $\emu$ which is of non-landslide type?
\end{prob}

In \cite{Fan}, Fan answered the problem affirmatively and proved that if $\emu$ contains infinitely many extremals, then there always
exist infinitely many extremals of non-landslide type in $\emu$. The author gave a more precise formulation for the problem
 in \cite{Yao5} by
use of variability set and point shift differentials, that is,

\begin{theo}\label{Th:nonls}
Let  $\emu$ be given in
$\ts$. Then either each extremal in $\emu$ is of non-landslide type
or there are infinitely many non-landslide extremals of non-constant
modulus in $\emu$.
\end{theo}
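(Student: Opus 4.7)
The plan is to prove the dichotomy by assuming the first alternative fails and then producing the asserted infinite family. So suppose there exists a landslide extremal $\mu_0\in[\mu]$: there is a non-empty open set $E\subset S$ on which $\esssup_{z\in E}|\mu_0(z)|<k_0$, where $k_0:=k_0([\mu])$. My goal is to exhibit infinitely many non-landslide extremals of non-constant modulus in $[\mu]$.

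First, I would invoke Fan's affirmative answer from \cite{Fan} to produce at least one non-landslide extremal $\mu_1\in[\mu]$. Since $\mu_0$ is landslide and $\mu_1$ is not, the representatives are distinct, so the extremal in $[\mu]$ is not unique. This places $[\mu]$ outside the Strebel-point regime and allows one to apply the variability set and point shift differential machinery (as in the author's earlier work) to produce a large pool of extremal representatives in $[\mu]$. Note that the naive approach of taking convex combinations $\mu_t=t\mu_1+(1-t)\mu_0$ fails to give new non-landslide extremals, because on the set $E$ one has $|\mu_t|\le tk_0+(1-t)\esssup_E|\mu_0|<k_0$, so every such $\mu_t$ with $t<1$ is again landslide. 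Hence a more delicate perturbation scheme is required.

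Second, the technical core is to perturb $\mu_1$ by point shift differentials. For each admissible point $p$ in the variability set one obtains a Beltrami differential $\eta_p$ such that $\mu_1+t\eta_p\in[\mu]$ and remains extremal for $|t|$ in a small interval. Two properties must then be verified. For preservation of non-landslide type, one uses that for every open $U\subset S$ the essential supremum of $|\mu_1|$ on $U$ already equals $k_0$; arranging the phase of $\eta_p$ to be compatible with $\mu_1$ on a full-measure piece of each open set keeps $\esssup_U|\mu_1+t\eta_p|=k_0$. For non-constant modulus, one selects $\eta_p$ that is not a pure phase multiple of $\mu_1$ on its support, so pointwise addition introduces genuine variation in $|\mu_1+t\eta_p|$. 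Varying $p$ over an uncountable family of admissible directions yields pairwise distinct such Beltrami differentials.

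The main obstacle, as I see it, is the simultaneous enforcement of these two conditions: non-landslide type forces $|\mu_1+t\eta_p|$ to attain $k_0$ essentially on every open subset, while non-constant modulus requires genuine variation of the modulus on a set of positive measure. The key step is therefore the careful selection of $\eta_p$ supported on an open piece of $S$ where $\mu_1$ already realizes its supremum densely, with phase aligned with $\mu_1$ on an essential subset yet misaligned on a positive-measure complement. Once a single such good $\eta_p$ and interval of $t$ is obtained, the openness of the non-landslide and non-constant-modulus conditions in the parameter $t$, combined with the continuum of choices for $p$, yields the required infinite family and completes the proof.
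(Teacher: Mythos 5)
Your reduction is sound: it suffices to assume $\emu$ contains a landslide extremal and then to manufacture infinitely many non-landslide extremals of non-constant modulus, and your observation that convex combinations $t\mu_1+(1-t)\mu_0$ stay landslide is correct. But the proposal stops exactly where the theorem begins: the entire content of the statement is the construction of the perturbations $\eta_p$, and you replace that construction with the assertion that a ``careful selection'' with the two required properties exists. Two concrete problems. First, point shift differentials do not furnish a linear pencil $\mu_1+t\eta_p$ of extremals of the \emph{fixed} class $\emu$; Strebel's machinery \cite{St6} produces Teichm\"uller-type extremals of the subclasses obtained by pinning the image of an interior point inside the variability set, not additive perturbations of a chosen non-landslide representative. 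Second, even granting such a pencil, the claim that one can ``arrange the phase of $\eta_p$'' so that $\esssup_{U}|\mu_1+t\eta_p|=k_0$ for every open $U$ is precisely the difficulty, not its resolution: a non-landslide $\mu_1$ has no slack anywhere, so any admissible perturbation must be essentially tangential to the sphere of radius $k_0$ on every open set, and nothing in the proposal shows that a nonzero such $\eta_p$ exists, let alone one that simultaneously forces non-constant modulus. You diagnose this tension accurately in your final paragraph, but diagnosing it is not overcoming it.

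For orientation: the present paper does not reprove Theorem \ref{Th:nonls} (it is quoted from \cite{Yao5}, where the argument does rest on variability sets and point shift differentials, carried out in detail rather than invoked); what the paper does prove is the infinitesimal analogue, and its method points to the more robust strategy. One perturbs on the \emph{landslide} set, where there is room, rather than perturbing a non-landslide representative, where there is none: normalize (as in Lemma \ref{Th:lempseudo}) to an extremal $\mu$ vanishing a.e.\ on $LS(\mu)$, exhaust the open set $LS(\mu)$ by countably many disjoint disks $B_n$, and replace $0|_{B_n}$ by explicit degenerate Beltrami differentials such as $k\,z^n/|z|^n$ (Lemma \ref{Th:nclass}), which are trivial on $B_n$, non-landslide, and of constant modulus $k=k_0$; a variant vanishing on a compact set of positive measure and empty interior yields non-constant modulus. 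The infinitude then comes from the infinitely many choices of $n$, and Fan's theorem \cite{Fan} is subsumed rather than needed as an input. To make your outline into a proof you must either reproduce the actual point-shift construction of \cite{Yao5} in full, or switch to a surgery-on-the-landslide-set argument of this kind (taking care, in the Teichm\"uller rather than infinitesimal setting, that the local replacements are globally trivial and not merely infinitesimally trivial).
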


The goal of this paper is to show a strengthened  counterpart of Theorem \ref{Th:nonls} in the infinitesimal setting.
\begin{theorem}\label{Th:infnonls}
Let  $\zmu$ be given in
$\zs$. Then  $\zmu$ contains infinitely many non-landslide extremals of non-constant
modulus  unless $\zmu$ contains a unique extremal.
\end{theorem}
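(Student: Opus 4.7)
The plan is to argue the contrapositive: assuming $\zmu$ admits at least two distinct extremal representatives, I will exhibit infinitely many non-landslide extremals of non-constant modulus in $\zmu$. Write $k:=\|\zmu\|$ and let $E\subset Bel(S)$ denote the set of extremals in $\zmu$. Since $\zmu$ is defined by weak-$*$ continuous linear equations, $E=\zmu\cap\{\nu:\|\nu\|_\infty\le k\}$ is a weak-$*$ closed convex subset of the sphere $\{\nu:\|\nu\|_\infty=k\}$, and under the standing assumption it contains at least two points.

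As a first step I would show that $E$ already contains an element of non-constant modulus. Suppose to the contrary that $|\nu(z)|=k$ for a.e.\ $z\in S$ whenever $\nu\in E$. Taking distinct $\nu_0,\nu_1\in E$, convexity of $E$ gives $\tfrac12(\nu_0+\nu_1)\in E$, so $|\tfrac12(\nu_0(z)+\nu_1(z))|=k$ a.e.; but since $|\nu_0(z)|=|\nu_1(z)|=k$ a.e., the equality case of the triangle inequality in $\mathbb{C}$ forces $\nu_0=\nu_1$ a.e., a contradiction. Fix therefore $\mu_*\in E$ with $|\mu_*|$ non-constant and a second $\mu_\dagger\in E$, so that $\eta:=\mu_\dagger-\mu_*$ is a nonzero element of $\mcn(S)$.

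The main step is to promote $\mu_*$ to a non-landslide extremal while retaining non-constant modulus. Enumerate a countable base $\{U_n\}$ of open discs in $S$ on which $\mu_*$ is landslide, i.e.\ $\esssup_{U_n}|\mu_*|<k$, and for each such $U_n$ invoke the Infinitesimal Delta Inequality to produce a null differential $\eta_n\in\mcn(S)$, essentially supported in $U_n$ and built from a localization of $\eta$, with the property that $\mu_*+\eta_n\in E$ and $\esssup_{U_n}|\mu_*+\eta_n|=k$. Choosing weights $t_n>0$ decreasing rapidly enough that the weighted sum $\eta_\infty:=\sum_n t_n\eta_n$ converges in $L^\infty$ and preserves membership in $E$, the differential $\tilde\mu:=\mu_*+\eta_\infty$ is then a non-landslide extremal; because the modifications live on open sets where $|\mu_*|$ was already below $k$, the positive-measure set $\{z:|\mu_*(z)|<k\}$ mostly survives, so $|\tilde\mu|$ is still non-constant. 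Varying the weight sequence $(t_n)$, or rechoosing $\mu_\dagger$ within the affine family $E$, then produces a continuum of such $\tilde\mu$, among which infinitely many are pairwise distinct in $L^\infty(S)$.

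The principal difficulty, as I see it, lies in the infinite iteration of the Infinitesimal Delta Inequality: each localized modification $\eta_n$ must lie in $\mcn(S)$, must not destroy the non-landslide status already gained on $U_m$ for $m<n$, and must combine in a summable, $L^\infty$-controlled manner so that $\mu_*+\eta_\infty$ lands exactly on the sphere of radius $k$ rather than strictly inside. Arranging, simultaneously, that a positive-measure set of ``low modulus'' survives the entire construction—so that non-constant modulus is preserved in the limit—is the delicate bookkeeping where I expect the analytical core of the proof to concentrate.
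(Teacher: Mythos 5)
Your first step is correct and is in fact a clean, self-contained derivation of a fact the paper only cites from the literature: if every extremal in $\zmu$ had constant modulus $k$, convexity of the extremal set plus the equality case of the triangle inequality would force uniqueness. That part stands. The problem is that your ``main step'' is a plan rather than a proof, and the specific scheme you propose would fail. If you rescale each localized null differential by a weight $t_n<1$ chosen to make $\sum_n t_n\eta_n$ converge, then $\esssup_{U_n}\lvert\mu_*+t_n\eta_n\rvert$ drops back strictly below $k$, so the very non-landslide property you installed on $U_n$ is destroyed for all but finitely many $n$; and without weights, a sum over a countable \emph{base} of overlapping discs has no reason to converge in $L^\infty$ or to stay on the sphere of radius $k$. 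The paper avoids this by never summing over an overlapping family: it decomposes the landslide set, up to measure zero, into \emph{pairwise disjoint} round discs $B_n$ and performs one modification per disc, each of modulus exactly $k$, so the ``sum'' is just a gluing along disjoint supports and no convergence issue arises.

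You are also missing the two tools that make each local modification possible. First, the correct device is not the (Infinitesimal) Delta Inequality but Reich's extension theorem (the paper's Lemma 2.1): any Beltrami coefficient prescribed on a small disc extends to a null differential of \emph{arbitrarily small} norm outside that disc, which is what lets one overwrite $\mu$ on a sub-disc of the landslide set by any prescribed $\alpha$ with $\|\alpha\|_\infty\le\|\mu\|_\infty$ without raising the global norm (Lemma 2.2). Second, and this is the analytic core you flagged but did not resolve, one must first replace the extremal by one that vanishes a.e.\ on its \emph{own} landslide set (Lemma 2.3); this requires a greedy exhaustion by discs of near-maximal radius together with a weak-$*$ compactness argument, precisely because each modification changes the landslide set and a naive iteration need not terminate or stabilize. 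Once that normal form is achieved, the paper plants the explicit non-landslide null differentials $k z^n/\lvert z\rvert^n$ (constant modulus) on all but one disc and a non-constant-modulus non-landslide null differential on the remaining disc, which settles both the non-landslide property and the non-constant modulus at once --- rather than hoping, as you do, that a positive-measure low-modulus set ``mostly survives'' the limit. Finally, note that the paper must also treat separately the case where every extremal is already non-landslide but some have constant modulus; there your convexity argument does essentially the right thing, and that case is fine.
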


Unfortunately, due to the loss of  the notion of boundary map and
variability set, the required point shift differentials (see \cite{St6}) in the proof of  Theorem \ref{Th:nonls} is no longer available. To overcome the difficulty, we develop a new technique in a self-contained way.

One must not expect that there
always exists a non-landslide extremal of constant modulus in
$\zmu$. The reason is that each extremal in $\zmu$ can be of
non-constant modulus (see \cite{BLMM, Yao3}).
Therefore, we have a direct corollary.

\begin{cor}
There is a  point $\zmu$ in $\zs$ such that $\zmu$ contains infinitely many  non-landslide
extremals  and each  non-landslide
extremal in $\zmu$ is of non-constant modulus.
\end{cor}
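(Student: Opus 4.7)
The plan is to exploit the linear structure of the infinitesimal equivalence relation, which is absent in the Teichm\"uller setting and is precisely what should let us extract many extremals once any two distinct extremals are in hand. Set $k:=\|\emb\|$ and assume $\emb$ has more than one extremal, so there exist distinct extremals $\mu_0,\mu_1\in\emb$ with $\|\mu_0\|_\infty=\|\mu_1\|_\infty=k$. Linearity of the equivalence gives $\eta:=\mu_1-\mu_0\in\mcn(S)\setminus\{0\}$, and the affine segment
\begin{equation*}
\mu_\lambda:=\mu_0+\lambda\eta=(1-\lambda)\mu_0+\lambda\mu_1,\qquad\lambda\in[0,1],
\end{equation*}
lies in $\emb$ with $\|\mu_\lambda\|_\infty\le k$ by the triangle inequality and $\|\mu_\lambda\|_\infty\ge k$ by the definition of $k$; hence every $\mu_\lambda$ is extremal. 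This immediately supplies a continuum of extremals in $\emb$.

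The non-constant modulus claim along this family then comes cheaply from strict convexity of the Euclidean disk: for $\lambda\in(0,1)$, $|\mu_\lambda(z)|=k$ forces $\mu_0(z)=\mu_1(z)$ and $|\mu_0(z)|=k$, and this condition fails on a positive-measure set because $\mu_0\neq\mu_1$ in $Bel(S)$. So every interior $\mu_\lambda$ is of non-constant modulus, and the only substantive task is to exhibit infinitely many that are of non-landslide type.

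For this I would first observe that $\lambda\mapsto|\mu_\lambda(z)|$ is convex in $\lambda$ for each $z$ (being the Euclidean norm of an affine complex-valued function), hence $f_U(\lambda):=\esssup_U|\mu_\lambda|$ is convex on $[0,1]$ for every nonempty open $U\subset S$. Combined with $f_U\le k$, a short maximum-principle argument for convex functions on an interval forces the level set $I_U:=\{\lambda:f_U(\lambda)=k\}$ to be one of $\emptyset,\{0\},\{1\},[0,1]$. If $I_U=[0,1]$ for every $U$, then every $\mu_\lambda$ is non-landslide and we are done. Otherwise some open $U^*$ has $I_{U^*}\subsetneq[0,1]$, the line family alone fails, and I would enrich it by perturbations $\tau\in\mcn(S)$ with $\operatorname{supp}\tau\subset U^*$. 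Such nonzero $\tau$ exist because $\mcn(S)\cap L^\infty(U^*)$ is the annihilator of the restriction $Q(S)|_{U^*}$ in $L^\infty(U^*)$, hence a nontrivial weak$^*$-closed subspace. For an appropriate small $s>0$, $\mu_\lambda+s\tau$ stays inside the $L^\infty$-ball of radius $k$ (so remains extremal in $\emb$) while $\tau$ can be arranged so that $|\mu_\lambda+s\tau|$ attains $k$ on a positive-measure piece of $U^*$. Iterating this filling across a countable basis of open sets with summable perturbation norms would produce a single non-landslide extremal $\nu^*\in\emb$; then convex combinations $(1-t)\nu^*+t\mu_0$ for $t\in[0,1)$, together with the strict convexity argument applied to this new pair, deliver the required infinite family of non-landslide extremals of non-constant modulus.

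The principal obstacle is the perturbation step: mere nontriviality of $\mcn(S)\cap L^\infty(U^*)$ is insufficient; one needs $\tau$ whose pointwise behaviour can be steered both to preserve $\|\mu_\lambda+s\tau\|_\infty\le k$ and to drive $|\mu_\lambda+s\tau|$ up to $k$ on a positive-measure set inside $U^*$. Since Strebel's point-shift differentials used in the Teichm\"uller proof of Theorem~\ref{Th:nonls} are unavailable here (the infinitesimal setting has no boundary maps or variability sets), a direct duality argument against $Q(S)|_{U^*}$ is required---this is the self-contained new technique promised in the introduction.
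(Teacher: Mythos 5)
Your proposal proves the wrong statement. The corollary is an existence claim whose second clause is universal: one must exhibit a \emph{particular} class $\zmu$ that (a) contains infinitely many non-landslide extremals and (b) contains \emph{no} non-landslide extremal of constant modulus. Your argument takes an \emph{arbitrary} class with at least two extremals and manufactures some non-landslide extremals of non-constant modulus inside it; even if every step were completed, that is (a fragment of) Theorem \ref{Th:infnonls}, and it says nothing about clause (b). For a generic non-uniquely-extremal class nothing rules out the simultaneous presence of a non-landslide extremal of constant modulus (a differential of Teichm\"uller form $k\bar\vp/|\vp|$, when it is a non-unique extremal of its class, is non-landslide and of constant modulus), so clause (b) cannot come from linearity and convexity alone. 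The paper's proof of the corollary is a two-line deduction: by \cite{BLMM, Yao3} there exist classes $\zmu$ with non-unique extremals in which \emph{every} extremal has non-constant modulus; applying Theorem \ref{Th:infnonls} to such a class yields infinitely many non-landslide extremals, and clause (b) holds automatically because it already holds for all extremals. The citation of those known examples is the missing idea, and no internal analysis of an unspecified class can replace it.

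Secondary issues in the part you did argue: your case analysis for $I_U$ omits the possibility $I_U=\{0,1\}$ (take $f(\lambda)=k-\lambda(1-\lambda)$); and the perturbation step, which you flag yourself, is not a proof --- the mechanism the paper actually uses to produce non-landslide extremals is the chain of Lemma \ref{Th:reich}, Lemma \ref{Th:land}, Lemma \ref{Th:lempseudo} together with the explicit differentials $k z^n/|z|^n$ of Lemma \ref{Th:nclass}, not a duality argument against the restriction of $Q(S)$ to an open subset.
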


 We will prove some lemmas in Section \ref{S:lemma}. Theorem \ref{Th:infnonls}
will be proved in Section \ref{S:infvers}. In addition,  we  define the local dilatation and discuss the landslide set  for an infinitesimal equivalence class in the last section.

\begin{centering}\section{\!\!\!\!\!{. }Some lemmas}\label{S:lemma}\end{centering}

\begin{lemma}\label{Th:reich}
Let $\nu\in Bel(\de)$. Then for any given $\epsilon>0$, there exists some $r\in (0,1)$  and  $\mu\in Bel(\de)$ such that \\
(1) $\mu\in \mcn(\de),$ (2) $\mu(z)=\nu(z)$, $z\in \de_r$, (3) $\norm{\mu|_{U_r}}_\infty<\epsilon$,\\
where $\de_r=\{z\in \de:|z|<r\}$, $r\in (0,1)$ and $U_r=\de\backslash \de_r$.
\end{lemma}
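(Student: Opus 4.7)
I look for $\mu$ in the form $\mu = \nu\chi_{\de_r} + \psi\chi_{U_r}$ for some yet-to-be-chosen $\psi \in L^\infty(U_r)$. With this ansatz, (2) holds automatically and (3) reduces to $\norm{\psi}_\infty < \eps$, while (1) is equivalent to the pairing identity
\begin{equation*}
\iint_{U_r}\psi\vp\,dxdy \;=\; -\iint_{\de_r}\nu\vp\,dxdy, \qquad \vp\in Q(\de).
\end{equation*}
So the task is to produce, for some sufficiently small $r$, a bounded $\psi$ on $U_r$ with $\norm{\psi}_\infty<\eps$ realizing the right-hand side as a linear functional on $Q(\de)$.

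\noindent\textbf{Key estimate.} For any $\vp\in Q(\de)$ the modulus $|\vp|$ is subharmonic on $\de$, and for $|z_0|\le r$ the closed disk $\overline{B(z_0,(1-r)/2)}$ sits inside $\de$. The sub-mean-value inequality therefore yields
\begin{equation*}
|\vp(z_0)| \;\le\; \frac{4}{\pi(1-r)^2}\,\norm{\vp}_1, \qquad |z_0|\le r,
\end{equation*}
so that $\iint_{\de_r}|\vp|\,dxdy \le \alpha(r)\norm{\vp}_1$ with $\alpha(r):=4r^2/(1-r)^2$, and $\alpha(r)\to 0$ as $r\to 0+$. Writing $\norm{\vp}_1=\iint_{\de_r}|\vp|+\iint_{U_r}|\vp|$ and rearranging produces the uniform inequality
\begin{equation*}
\iint_{\de_r}|\vp|\,dxdy \;\le\; \frac{\alpha(r)}{1-\alpha(r)}\iint_{U_r}|\vp|\,dxdy, \qquad \vp\in Q(\de),
\end{equation*}
valid whenever $\alpha(r)<1$.

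\noindent\textbf{Duality and conclusion.} Consider $X:=\{\vp|_{U_r}:\vp\in Q(\de)\}\subset L^1(U_r)$. By the identity theorem for holomorphic functions, each $\vp\in Q(\de)$ is determined by $\vp|_{U_r}$, so the formula
\begin{equation*}
\Lambda(\vp|_{U_r}) \;:=\; -\iint_{\de_r}\nu\vp\,dxdy
\end{equation*}
defines a linear functional on $X$ whose norm is bounded by $\nnu\,\alpha(r)/(1-\alpha(r))$ thanks to the key estimate. Choose $r\in(0,1)$ with $\nnu\,\alpha(r)/(1-\alpha(r))<\eps$. By Hahn--Banach, $\Lambda$ extends to a continuous linear functional on $L^1(U_r)$ with the same norm bound; the duality $L^1(U_r)^*\cong L^\infty(U_r)$ then produces $\psi\in L^\infty(U_r)$ with $\norm{\psi}_\infty<\eps$ and $\iint_{U_r}\psi\vp\,dxdy=\Lambda(\vp|_{U_r})$ for every $\vp\in Q(\de)$. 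Setting $\mu:=\nu\chi_{\de_r}+\psi\chi_{U_r}$ verifies conditions (1)--(3). The principal obstacle is the uniform concentration bound $\iint_{\de_r}|\vp|\le\alpha(r)\norm{\vp}_1$; once it is in place the duality argument is standard, with the identity theorem supplying the well-definedness of $\Lambda$ on the restriction subspace $X$.
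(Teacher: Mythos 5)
Your proof is correct, but it takes a genuinely different route from the paper's. The paper invokes Reich's theorem (Theorem 1.1 of [Re2]), which supplies the \emph{unique} holomorphic function $\beta$ on $\mc\backslash\ov{\de_r}$ whose gluing with $\nu|_{\de_r}$ lies in $\mcn(\de)$, given explicitly as a Cauchy-type transform $\beta(z)=-\tfrac{z}{\pi(1-r^2)}\iint_{\de_r}\tfrac{\nu(\zeta)}{\zeta-z}\,d\xi d\eta$; condition (3) is then obtained by a direct estimate of the singular integral $\iint_{\de_r}|\zeta-z|^{-1}d\xi d\eta$, yielding a bound of order $r$. You instead make the problem soft: the gluing condition (1) becomes the moment identity $\iint_{U_r}\psi\vp=-\iint_{\de_r}\nu\vp$, and you produce $\psi$ by Hahn--Banach plus the duality $L^1(U_r)^*\cong L^\infty(U_r)$, the quantitative input being the sub-mean-value concentration bound $\iint_{\de_r}|\vp|\le\frac{\alpha(r)}{1-\alpha(r)}\iint_{U_r}|\vp|$ with $\alpha(r)=4r^2/(1-r)^2$. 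All the steps check out: the closed disk $\ov{B(z_0,(1-r)/2)}$ does stay in $\de$ for $|z_0|\le r$, the functional is well defined on the restriction subspace by the identity theorem, and the extension's norm is controlled by $\nnu\,\alpha(r)/(1-\alpha(r))<\eps$ for small $r$. What your argument buys is self-containedness (no appeal to [Re2]) and an even better decay rate $O(r^2)$; what it gives up is the explicitness of the completion --- Reich's $\beta$ is holomorphic and canonical on $U_r$, whereas your $\psi$ is merely some $L^\infty$ representative of a Hahn--Banach extension, which is neither unique nor constructive. Neither of these extra features is needed for the lemma or its later applications (Lemmas 2.2 and 2.3), so your proof is a legitimate substitute.
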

\begin{proof}
By Theorem 1.1 in \cite{Re2}, there exists a unique function $\beta(z)$, holomorphic in $\mc\backslash \ov{\de_r}$, such that
\begin{equation*}
\mu(z)=\begin{cases}\nu(z),  \quad z\in \de_r,\\
\beta(z),\quad z\in U_r,
\end{cases}
\end{equation*}
belongs to $\mathcal{N}(\de)$; namely,
\begin{equation*}
\beta(z)=-\frac{z}{\pi (1-r^2)}\iint_{\de_r}\frac{\nu(z)}{\zeta-z}d\xi d\eta,\; z\in \mc\backslash \ov{\de_r}.
\end{equation*}
To complete the proof of this lemma, it is sufficient to show that $\norm{\beta|_{U_r}}_\infty<\epsilon$ for small $r>0$.
We need to valuate $|\beta(z)|$ for $z\in U_r$. Let $z'$ be the intersection point of the segment $\ov{oz}$ with the circle $\{|\zeta|=r\}$ and $B_r=\{|\zeta-z'|<\frac{r}{2}\}$. Then  for $z\in U_r$,
\begin{align*}
&|\beta(z)|=\frac{|z|}{\pi (1-r^2)}\left|\iint_{\de_r}\frac{\nu(z)}{\zeta-z}d\xi d\eta\right|\leq\frac{\norm{\nu}_\infty}{\pi (1-r^2)}\iint_{\de_r}\frac{1}{|\zeta-z|}d\xi d\eta\\
&\leq M\iint_{\de_r}\frac{1}{|\zeta-z'|}d\xi d\eta\leq M\left(\iint_{B_r}\frac{1}{|\zeta-z'|}d\xi d\eta+\iint_{\de_r\backslash B_r}\frac{1}{|\zeta-z'|}d\xi d\eta\right)\\
&\leq M\left(\pi r+\frac{2}{r}\iint_{\de_r}d\xi d\eta\right)=3M\pi r=\frac{3\pi\norm{\nu}_\infty r}{\pi (1-r^2)},
\end{align*}
where $M=\frac{\norm{\nu}_\infty}{\pi (1-r^2)}$. This lemma follows readily.

\end{proof}

Generally, for a given  Beltrami differential $\mu\in Bel(S)$ and a
point $p\in S$, define
\begin{equation*}\label{Eq:sigrho}
\sigma(\mu,p)=\inf\{\esssup_{z\in U}|\mu(z)|:\, U \text{ is an open
 neighborhood
 in } S \text{ containing }p \}
\end{equation*}
to be the local dilatation of $\mu$ at $p$. If
$\sigma(\mu,p)=\nmu$, we call $p$ a non-landslide point of
$\mu$, otherwise, $p$ a landslide point of $\mu$.
The collection of all landslide points of $\mu$ is called the landslide set of $\mu$, 
denoted by  $LS(\mu)$. We call $NSL(\mu)=S\backslash LS(\mu)$ the non-landslide set of $\mu$. It is possible that
$LS(\mu)=S$.   It is obvious that $LS(\mu)$ is an open subset of $S$. In particular, $\mu$ is non-landslide if and only if $LS(\mu)=\emptyset$.

\begin{rem} The local boundary dilatation of $\mu$ is defined for the boundary points (see Chapter 17 in \cite{GL}). Here, we generalize the notion for the inner points. Generally, $\sigma(\mu,p)$ and $|\mu(p)|$ are two different quantities for $p\in S$ and $\sigma(\mu,p)\geq|\mu(p)|$ for almost all $p\in S$ by Lebesgue's Theorem.

\end{rem}

Throughout the paper, we denote by $\de(\zeta,r)$ the round disk $\{z:\;|z-\zeta|<r\}$ ($r>0$).

\begin{lemma}\label{Th:land}Suppose that $\mu\in Bel({S})$ and $LS(\mu)\neq \emptyset$. Let $\alpha\in Bel(S)$ with $\norm{\alpha}_\infty\leq \norm{\mu}_\infty$.  Then, for any $\zeta \in LS(\mu)$, there exists a disk $\de(\zeta,r)$ in $LS(\mu)$ and  $\nu\in \bmu$ such that $\nu=\alpha$ on $\de(\zeta,r)$, $\norm{\nu}_\infty=\norm{\mu}_\infty$ and $\nu(z)=\mu(z)$ on $NLS(\mu)$. In particular, $\nu$ vanishes on $\de(\zeta,r)$ when $\alpha=0$.
\end{lemma}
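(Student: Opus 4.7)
The plan is to use the null Beltrami differential produced by Lemma~\ref{Th:reich} to modify $\mu$ inside a small round disk of $LS(\mu)$ without changing its infinitesimal class. Set $k := \|\mu\|_\infty$. Since $\zeta \in LS(\mu)$ and $LS(\mu)$ is open, the definition of landslide point lets me fix $R > 0$ with $\de(\zeta, R) \subset LS(\mu)$ and $k_0 := \esssup_{\de(\zeta, R)} |\mu| < k$.

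Next I would transport Lemma~\ref{Th:reich} from the unit disk to $\de(\zeta, R)$ via the affine coordinate change $w \mapsto \zeta + Rw$, which preserves both the $L^\infty$ norm and the property of being infinitesimally trivial. Applied to $\alpha - \mu$ on $\de(\zeta, R)$ with $\eps := k - k_0$, this yields some $r \in (0, R)$ and an $\eta \in \mcn(\de(\zeta, R))$ that equals $\alpha - \mu$ on $\de(\zeta, r)$ and satisfies $\|\eta|_{\de(\zeta, R) \setminus \de(\zeta, r)}\|_\infty < k - k_0$. I then extend $\eta$ by zero to all of $S$; the crucial observation is that the extended $\eta$ still lies in $\mcn(S)$, because any $\vp \in \qs$ restricts to an integrable holomorphic quadratic differential on $\de(\zeta, R)$ and hence $\iint_S \eta \vp\, dxdy = \iint_{\de(\zeta, R)} \eta \vp\, dxdy = 0$.

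Setting $\nu := \mu + \eta$, everything falls into place: $\nu \in \bmu$ automatically; $\nu = \alpha$ on $\de(\zeta, r)$; and $\nu = \mu$ on $S \setminus \de(\zeta, R)$, which contains $NLS(\mu)$. The bound $\|\nu\|_\infty \leq k$ follows by a three-region split (on $\de(\zeta, r)$, $|\nu| = |\alpha| \leq k$; on the annulus, $|\nu| \leq k_0 + (k - k_0) = k$; elsewhere, $|\nu| = |\mu| \leq k$), while the reverse inequality comes from the observation that $\esssup_{\de(\zeta, R)} |\mu| \leq k_0 < k$ combined with $\|\mu\|_\infty = k$ forces $\esssup_{S \setminus \de(\zeta, R)} |\mu| = k$, and $\nu = \mu$ there. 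The case $\alpha \equiv 0$ specializes to $\nu \equiv 0$ on $\de(\zeta, r)$. The only point that really needs care, rather than being a genuine obstacle, is the verification that the locally null $\eta$ remains null after extension by zero to $S$; this is what makes a purely local Reich-type construction yield a global member of $\bmu$.
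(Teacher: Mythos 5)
Your proof is correct and follows essentially the same route as the paper's: restrict to a small disk $D\subset LS(\mu)$ on which $\esssup|\mu|<\norm{\mu}_\infty$, apply Lemma~\ref{Th:reich} to $\alpha-\mu$ there to get a locally trivial differential equal to $\alpha-\mu$ on an inner disk and small on the annulus, and glue it onto $\mu$. The paper leaves the affine transport, the extension-by-zero argument, and the norm verification implicit, while you spell them out, but the construction is identical.
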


\begin{proof}
Choose sufficiently small $\rho>0$ such that the disk $D=\{|z-\zeta|<\rho\}$ is contained in $LS(\mu)$. Restrict $\mu$ on $D$. Then $\norm{\mu|_D}_\infty<\norm{\mu}_\infty$. Let $\epsilon=\frac{\norm{\mu}_\infty-\norm{\mu|_D}_\infty}{2}$.
Applying  Lemma \ref{Th:reich} to $D$, we can find  some small $r\in (0,\rho)$ and $\chi\in\mcn(D)$ such that $\chi=\alpha-\mu$ on $\de(\zeta,r)$ and $\norm{\chi}_\infty<\epsilon$ on $D\backslash \de(\zeta,r)$. Put
\begin{equation*}
\nu(z)=\begin{cases}\chi(z)+\mu(z),  \quad &z\in D,\\
\mu(z), & z\in {S}\backslash D.
\end{cases}
\end{equation*}
Then $\nu\in\bmu$ and $\nu=\alpha$ on $\de(\zeta,r)$. It is clear that  $\norm{\nu}_\infty=\norm{\mu}_\infty$ and $\nu(z)=\mu(z)$ on $NLS(\mu)$.
\end{proof}

\begin{lemma}\label{Th:lempseudo}Suppose  $\mu\in Bel(S)$ with $\lnorm{\mu}=k$. Let  $\msu=\{\alpha\in \bmu:\; \lnorm{\alpha}=k\}$.
Then there exists a Beltrami differential $\nu\in \msu$ such that   $\nu(z)=0$  a.e. on  $LS(\nu)$.
\end{lemma}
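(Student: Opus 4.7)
The idea is to iteratively apply Lemma \ref{Th:land} with $\alpha = 0$ at a dense sequence of points in the landslide set, zeroing out $\mu$ on a growing open subset $V_\infty \subset S$, and pass to an $L^1$-limit $\nu \in \msu$ satisfying $\nu = 0$ a.e. on $LS(\nu)$.

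If $LS(\mu) = \emptyset$, take $\nu = \mu$. Otherwise, fix a countable dense sequence $\{p_n\}_{n=1}^{\infty}$ in $S$, set $\mu_0 = \mu$ and $V_0 = \emptyset$, and at stage $n \geq 1$: if $p_n \in LS(\mu_{n-1}) \setminus \overline{V_{n-1}}$, pick a disk $D_n = \de(p_n, \rho_n) \subset LS(\mu_{n-1}) \setminus \overline{V_{n-1}}$ with $\rho_n < 2^{-n}$ and invoke Lemma \ref{Th:land} with $\zeta = p_n$, $\alpha = 0$ to obtain $\mu_n \in \bmu$, $\norm{\mu_n}_\infty = k$, vanishing on $\de(p_n, r_n)$ for some $r_n \in (0, \rho_n)$, and agreeing with $\mu_{n-1}$ off $D_n$; put $V_n = V_{n-1} \cup \de(p_n, r_n)$. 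Otherwise keep $\mu_n = \mu_{n-1}$, $V_n = V_{n-1}$. Since $|D_n| < \pi 4^{-n}$, the Borel--Cantelli lemma ensures $\mu_n(z)$ is eventually constant for a.e. $z$; hence $\mu_n$ converges a.e. and in $L^1(S)$ to some $\nu$ with $\norm{\nu}_\infty \leq k$. Pairing against $\varphi \in Q(S)$ and invoking dominated convergence gives $\nu \in \bmu$, and by construction $\nu \equiv 0$ a.e. on $V_\infty := \bigcup_n V_n$.

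The landslide-vanishing property $\nu = 0$ a.e. on $LS(\nu)$ will then reduce to showing $|LS(\nu) \setminus V_\infty| = 0$. I would argue this by density: if a Lebesgue point $p \in LS(\nu)$ of $\nu$ with $\nu(p) \neq 0$ had $p \notin \overline{V_\infty}$, some $p_m$ in the open set $LS(\nu) \setminus \overline{V_\infty}$ would also have $\sigma(\mu_{m-1}, p_m) < k$ (since $\nu$ and $\mu_{m-1}$ differ only on the tail $\bigcup_{j \geq m} D_j$, which has vanishingly small area near $p_m$ by the choice $\rho_j < 2^{-j}$), so $p_m$ would have been eligible at stage $m$, and a disk around it would have been added to $V_m \subset V_\infty$, contradicting $p_m \notin V_\infty$.

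The main obstacle I foresee is establishing $\norm{\nu}_\infty = k$ rather than merely $\leq k$. The crucial structural input is that every modification occurs strictly inside $LS(\mu_{n-1})$, where $|\mu_{n-1}| < k$ a.e.; consequently the ``high set'' $H_n^{\eps} := \{z : |\mu_n(z)| > k - \eps\}$ satisfies $H_n^{\eps} \supset H_{n-1}^{\eps} \setminus D_n$, and iterating forces $|\{z : |\nu(z)| > k - \eps\}| \geq |H_0^{\eps}| - \sum_j |D_j|$. By choosing $\rho_n$ small enough that $\sum_n \pi \rho_n^2 < |H_0^{\eps}|$ for each relevant $\eps > 0$ (via an adaptive, diagonal selection of radii), one would obtain $\norm{\nu}_\infty > k - \eps$ for every $\eps > 0$, hence $\norm{\nu}_\infty = k$. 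Making this rigorous in the case when the essential supremum of $|\mu|$ is not attained on a set of positive measure is the delicate bookkeeping that presumably motivates the author's new self-contained technique.
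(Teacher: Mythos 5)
Your construction differs from the paper's in two essential design choices, and both differences open genuine gaps. The first concerns the set $LS(\nu)\cap(\overline{V_\infty}\setminus V_\infty)$. Your closing density argument does work for a Lebesgue point $p\in LS(\nu)$ with $\nu(p)\neq 0$ and $p\notin\overline{V_\infty}$: only finitely many $D_j$ meet a fixed neighbourhood of such a $p$ (every eligible centre $p_j$ stays at definite distance from $p$ and $\rho_j<2^{-j}$), so $\nu$ agrees there with some $\mu_N$, a later $p_m$ near $p$ is eligible, and the disk added at stage $m$ contradicts $p\notin\overline{V_\infty}$. But the argument says nothing about points of $LS(\nu)$ lying in $\overline{V_\infty}\setminus V_\infty$, and this set can have positive measure: the inner radii $r_n$ are handed to you by Lemma \ref{Th:reich}/Lemma \ref{Th:land} and are not under your control from below, so $V_\infty$ may be a union of very small disks whose centres accumulate on a large portion of $LS(\nu)$ while $|V_\infty|$ stays tiny; off $\bigcup_j D_j$ the limit $\nu$ equals the original $\mu$ and need not vanish there. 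This is exactly the difficulty the paper's bookkeeping is built to avoid: at each stage it selects a competitor whose null disk has at least half the supremal available radius $\rho_n$, deduces $\rho_n\to 0$, and then any residual point $p$ with $\sigma(\nu,p)\in(0,k)$ produces, via Lemma \ref{Th:land}, an element of every $\Lambda_n$ vanishing on a disk of fixed radius, contradicting $\rho_n\to 0$; no information about the closure of the union of chosen disks is needed.

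The second gap is $\lnorm{\nu}=k$, which you flag but do not close. Your inequality $|\{|\nu|>k-\eps\}|\geq|H_0^{\eps}|-\sum_j|D_j|$ is correct, but to exploit it for every $\eps>0$ you would need $\sum_j\pi\rho_j^2<\inf_{\eps>0}|H_0^{\eps}|$, and this infimum equals $|\{z:|\mu(z)|=k\}|$, which is $0$ precisely in the cases the lemma must cover (non-constant modulus, essential supremum not attained on a set of positive measure); the radii must be fixed before you know which $\eps$ matters, so no adaptive or diagonal choice rescues the subtraction. The paper secures the norm differently: it first passes, via Lemma \ref{Th:land}, to a representative $\chi$ with $NLS(\chi)\neq\emptyset$ and then never modifies anything on $NLS(\chi_n)\supseteq NLS(\chi)$, so the limit retains local dilatation $k$ at a non-landslide point. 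Your Borel--Cantelli limiting step and the verification that $\nu\in\bmu$ are fine, but without repairing these two points the argument does not prove the lemma.
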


\begin{proof} By Lemma   \ref{Th:land}, we can choose $\chi\in \msu$ such that $NLS(\chi)\neq\emptyset$.
   If in addition $LS(\chi)=\emptyset$, then  $\nu=\chi$ is the desired Beltrami differential. Otherwise, $LS(\chi)$ is a non-empty open subset in $S$. By Lemma \ref{Th:land}, there exists a Beltrami differential $\eta\in\msu$  such that \\
(1) $\eta(z)=\chi(z)$  on $NLS(\chi)$,\\
(2)  $\eta(z)=0$ on some small disk $\de(z'_0,r'_0)\subset LS(\chi)$.


  Let $\Lambda_0$ denote the collection of $\alpha\in \msu$ with the following conditions:\\
(a) $\alpha(z)=\eta(z)$ on $NLS(\eta)$,\\
(b) there exists some small disk $\de(\zeta,r)\subset S$  such that $\alpha(z)=0$ on $\de(\zeta,r)$.

  It is obvious that $\eta\in \Lambda_0$. If $\alpha\in \Lambda_0$, let
\[\rho_0(\alpha)=\sup\{r:\;\alpha(z)=0 \text{ on some }\de(\zeta,r)\subset S\}.\]
Put
\[\rho_0=\sup\{\rho_0(\alpha):\; \alpha\in \Lambda_0\}.\]

   We proceed with the construction of  a sequence of Beltrami differentials $\{\chi_n\}$ in $\msu$.

  $n=0)$ Choose $\chi_0\in \Lambda_0$ such that $\rho_0(\chi_0)\geq r_0:=\frac{\rho_0}{2}$ and $\chi_0(z)=0$ on some $\de(z_0,r_0)\subset S$.  If $\chi_0(z)=0$ a.e. on $LS(\chi_0)$, then   $\nu=\chi_0$ is the desired Beltrami differential. Otherwise,  by the definition of local dilatation there exists $z'_1\in LS(\chi_0)$ such that  $\sigma(\chi_0,z'_1)\in (0, k)$.   Again by Lemma \ref{Th:land}, there exists a Beltrami differential $\eta_0\in\Lambda_0$  such that \\
(1) $\eta_0(z)=\chi_0(z)$ on $NLS(\chi_0)$,\\
(2) $\eta_0(z)=0$ on $\de(z_0,r_0)$,\\
(3)  $\eta_0(z)=0$ on some small disk $\de(z'_1,r'_1)\subset LS(\chi_0)\backslash \de(z_0,r_0)$.

 Let $\Lambda_1$ denote the collection of $\alpha\in \Lambda_0$ with the following conditions:\\
(a) $\alpha(z)=\eta_0(z)$ on $NLS(\eta_0)$,\\
(b) $\alpha(z)=0$ on $\de(z_0,r_0)$,\\
(c) there exists some small disk $\de(\zeta,r)\subset S\backslash \de(z_0,r_0)$  such that $\alpha(z)=0$ on $\de(\zeta,r)$.

  It is obvious that $\eta_0\in \Lambda_1$. If $\alpha\in \Lambda_1$, let
\[\rho_1(\alpha)=\sup\{r:\;\alpha(z)=0 \text{ on some }\de(\zeta,r)\subset S\backslash \de(z_0,r_0)\}.\]
Put
\[\rho_1=\sup\{\rho_1(\alpha):\; \alpha\in \Lambda_1\}.\]

 $n=1)$  Choose $\chi_1\in \Lambda_1$ such that $\rho_1(\chi_1)\geq r_1:=\frac{\rho_1}{2}$ and $\chi_1(z)=0$ on some $\de(z_1,r_1)\subset S$.  If $\chi_1(z)=0$ a.e. on $LS(\chi_1)$, then   $\nu=\chi_1$ is the desired Beltrami differential. Otherwise,  by the definition of local dilatation there exists $z'_2\in LS(\chi_1)$ such that  $\sigma(\chi_1,z'_2)\in (0, k)$.   Again by Lemma \ref{Th:land}, there exists a Beltrami differential $\eta_1\in\Lambda_1$  such that \\
(1) $\eta_1(z)=\chi_1(z)$ on $NLS(\chi_1)$,\\
(2) $\eta_1(z)=0$ on $\de(z_0,r_0)\cup\de(z_1,r_1)$,\\
(3)  $\eta_1(z)=0$ on some small disk $\de(z'_2,r'_2)\subset LS(\chi_1)\backslash (\de(z_0,r_0)\cup \de(z_1,r_1))$.

 Let $\Lambda_2$ denote the collection of $\alpha\in \Lambda_1$ with the following conditions:\\
(a) $\alpha(z)=\eta_1(z)$ on $NLS(\eta_1)$,\\
(b) $\alpha(z)=0$ on $\de(z_0,r_0)\cup\de(z_1,r_1)$,\\
(c) there exists some small disk $\de(\zeta,r)\subset S\backslash (\de(z_0,r_0)\cup \de(z_1,r_1))$  such that $\alpha(z)=0$ on $\de(\zeta,r)$.

  It is obvious that $\eta_1\in \Lambda_2$. If $\alpha\in \Lambda_2$, let
\[\rho_2(\alpha)=\sup\{r:\;\alpha(z)=0 \text{ on some }\de(\zeta,r)\subset S\backslash (\de(z_0,r_0)\cup \de(z_1,r_1))\}.\]
Put
\[\rho_2=\sup\{\rho_2(\alpha):\; \alpha\in \Lambda_2\}.\]

  $n\to n+1)$ If we can choose a   $\chi_n\in \Lambda_n$ such that $\chi_n(z)=0$ a.e. on $LS(\chi_n)$, then let $\nu=\chi_n$.  Otherwise, as the previous, we can find an $\eta_n\in\Lambda_n$  such that \\
(1) $\eta_n(z)=\chi_n(z)$ on $NLS(\chi_n)$,\\
(2) $\eta(z)=0$ on $\bigcup_{k=0}^{n-1}\de(z_k,r_k))$,\\
(3)  $\eta_n(z)=0$ on some small disk $\de(z'_{n+1},r'_{n+1})\subset LS(\chi_n)\backslash (\bigcup_{k=0}^{n-1}\de(z_k,r_k))$.

  Proceeding as above, we find five sequences,  $\{\chi_n\in \Lambda_n\}$, $\{\eta_n\in \Lambda_n\}$, $\{\Lambda_n\subset \msu\} $, $\{\rho_n\},$ $\{\de(z_n,r_n)\subset{S}\}$ ($r_n=\frac{\rho_n}{2}$) as follows.

   $\chi_n\in \Lambda_{n}$ satisfies $\rho_{n}(\chi_n)\geq r_n:=\frac{\rho_{n}}{2}$ and
  \[\chi_n(z)=0 \text{ on some } \de(z_n,r_n)\subset S\backslash (\bigcup_{k=0}^{n-1}\de(z_k,r_k)).\]

 $\Lambda_{n+1}$ is  the collection of $\alpha\in \Lambda_n$ with the following conditions:\\
 (a) $\alpha(z)=\eta_n(z)$ on $NLS(\eta_n)$,\\
 (b) $\alpha(z)=0$ on $\bigcup_{k=0}^{n}\de(z_k,r_k))$,\\
(c) there exists some small disk $\de(\zeta,r)\subset S\backslash (\bigcup_{k=0}^n\de(z_k,r_k))$  such that $\alpha(z)=0$ on $\de(\zeta,r)$.

It is obvious that $\eta_n\in \Lambda_{n+1}$. If $\alpha\in \Lambda_{n+1}$, let
\[\rho_{n+1}(\alpha)=\sup\{r:\;\alpha(z)=0 \text{ on }\de(\zeta,r)\subset S\backslash (\bigcup_{k=0}^{n}\de(z_k,r_k))\}.\]
Put
\[\rho_{n+1}=\sup\{\rho_{n+1}(\alpha):\; \alpha\in \Lambda_{n+1}\}.\]

It is clear that
\begin{equation}\label{Eq:rhor}\lim_{n\to \infty}\rho_n=\lim_{n\to \infty}r_n=0.\end{equation}

  Let $B_n=\bigcup^n_{k=0}\de(z_k,r_k)$, $n=0,1,\cdots$. Then
  \[ B=\lim_{n\to\infty} B_n=\bigcup^\infty_{n=0}\de(z_n,r_n)\subset S.\]

By the *-weak compactness, there exists a subsequence of
$\{\chi_n\}$, still denoted by $\{\chi_n \}$, which converges to a
limit $\nu\in Bel({S})$ in the *-weak topology, that is, for
any $\phi\in L^1(S)$,
\begin{equation}\label{Eq:weak}
\lim_{n\to\infty}\iint_{S}\chi_n(z)\phi(z)dxdy=\iint_{S}\nu(z)\phi(z)dxdy.
\end{equation}
When $\phi\in Q(S)$, since $\chi_n\in \bmu$, we have
\begin{equation*}
\iint_{S}\chi_n(z)\phi(z)dxdy=\iint_{S}\mu(z)\phi(z)dxdy \text{ for all } n.
\end{equation*}
Therefore,
\[\iint_{S}\nu(z)\phi(z)dxdy=\iint_{S}\mu(z)\phi(z)dxdy\]
for all $\phi\in Q(S)$ and hence $\nu\in\bmu$. On the other hand,
since $\chi_n$ converges to $\nu$ in the *-weak topology, it follows by the standard functional analysis theory
that\begin{equation*}
\|\nu\|_\infty\leq\liminf_{n\to\infty}\|\chi_n\|_\infty=k.
\end{equation*}

  \textit{Claim.} $\nu\in \msu$ and $\nu(z)=0$ a.e. on $LS(\nu)$.

 Firstly, by the inductive construction, we see that $\chi_{n+1}(z)=\chi_{n}(z)$ on $NLS(\chi_n)$. So it holds that   $\nu(z)=\chi_n(z)$ on $NLS(\chi_n)$ for all $n$. Since $NLS(\chi_n)\supseteq NLS(\chi)$, we have $\lnorm{\nu}=k$ and hence  $\nu\in \msu$.

Secondly, since $\chi_n(z)=0$ on $B_n$, we find that $\chi_n(z)\to 0$ for all  $z\in B$ as $n\to \infty$. Therefore, by the uniqueness of *-weak limit, we must have $\nu(z)=0$ on $B$.

  Now we show that $\nu(z)=0$ a.e. on $LS(\nu)$. Suppose to the contrary. Then there exists a point $p\in LS(\nu)$ such that  $\sigma(p,\nu)\in (0,k)$.
  Applying Lemma \ref{Th:land}, we can find  a Beltrami differential $\nu'\in\msu$  such that \\
(1) $\nu'(z)=\nu(z)$ on $NLS(\nu)$,\\
(2) $\nu'(z)=0$ on $B$,\\
(3)  $\nu'(z)=0$ on some small disk $\de(p,r)\subset S\backslash B$.

It is obvious that $\nu'$ belongs to  $\bigcap_{n=0}^\infty\Lambda_n$.
However, by  (\ref{Eq:rhor}) it contradicts the choice of $\chi_n$. The claim is proved, so is the lemma.

  \end{proof}

\begin{centering}\section{\!\!\!\!\!{. }Proof of Theorem \ref{Th:infnonls}}\label{S:infvers}\end{centering}

\begin{lemma}\label{Th:nclass}Let $S=\de$. Fix $k>0$. Then (1) there exist infinitely many Beltrami differentials $\mu\in [0]_Z$ such that each $\mu$ is non-landslide and has a constant modulus $k$;
(2) there exist infinitely many Beltrami differentials $\mu\in [0]_Z$ such that each $\mu$ is non-landslide with  $\lnorm{\mu}=k$ but $\mu$ has not a constant modulus.
\end{lemma}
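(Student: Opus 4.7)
The two parts are essentially independent; I would handle (1) by radial examples $\mu(z)=ke^{i\theta(|z|)}$, and (2) by gluing together many small Reich-type differentials produced by Lemma \ref{Th:reich}, placed on a countable pairwise disjoint collection of disks dense in $\de$.

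For part (1), setting $\mu(z)=ke^{i\theta(|z|)}$ already forces $|\mu|\equiv k$, so such $\mu$ is automatically of constant modulus $k$ and non-landslide. Fubini together with the mean value property $\int_0^{2\pi}\varphi(re^{i\phi})\,d\phi=2\pi\varphi(0)$ for $\varphi\in Q(\de)$ gives
\[
\iint_\de \mu(z)\varphi(z)\,dxdy=2\pi k\,\varphi(0)\int_0^1 e^{i\theta(r)}r\,dr,
\]
so $\mu\in\mcn(\de)$ iff $\int_0^1 e^{i\theta(r)}r\,dr=0$. To produce infinitely many distinct admissible $\theta$'s I would restrict to $\theta$ valued in $\{0,\pi\}$: the condition becomes $\int_A r\,dr=\tfrac14$ where $A=\theta^{-1}(\pi)\subset[0,1]$, and the one-parameter family $A_a=[a,\sqrt{a^2+1/2}\,]$ with $a\in[0,1/\sqrt 2\,]$ furnishes uncountably many solutions, hence uncountably many pairwise distinct $\mu$'s of the required form.

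For part (2), I would fix a countable pairwise disjoint collection of open disks $D_n=\de(z_n,\rho_n)\subset\de$ with $\{z_n\}$ dense in $\de$, $\rho_n\to 0$, and $\sum\rho_n^2<1$ (so that $\de\setminus\bigcup_n D_n$ has positive measure). Via the affine maps $h_n(z)=z_n+\rho_n z$, Lemma \ref{Th:reich} applied on each $D_n$ produces, for any prescribed $\epsilon_n\downarrow 0$, a radius $r_n\in(0,\rho_n)$ (which I would also require to tend to $0$) together with $\mu_n\in\mcn(D_n)$ satisfying $\mu_n\equiv k$ on $\de(z_n,r_n)$ and $\lnorm{\mu_n|_{D_n\setminus\de(z_n,r_n)}}<\epsilon_n$. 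Extended by zero, each $\mu_n$ lies in $\mcn(\de)$ because $\varphi|_{D_n}\in Q(D_n)$ whenever $\varphi\in Q(\de)$. Disjointness of the $D_n$'s makes $\mu:=\sum_n \mu_n$ well-defined with $\lnorm\mu=k$, and dominated convergence yields $\mu\in\mcn(\de)$. Non-landslide follows by a density argument: for any $z\in\de$ and any open neighborhood $U$ of $z$, the density of $\{z_n\}$ together with $r_n\to 0$ let me pick $n$ with $z_n\in U$ and $r_n<d(z_n,\partial U)$, so that $\de(z_n,r_n)\subset U$ and $\esssup_U|\mu|=k$. Non-constant modulus is immediate from $\mu\equiv 0$ on the positive-measure set $\de\setminus\bigcup_n D_n$. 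Infinitely many examples arise by replacing the local constant $k$ with $ke^{i\alpha_n}$ for varying sequences $\{\alpha_n\}$.

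The main technical point I expect is combinatorial: in part (2) one must balance the density of $\{z_n\}$ against the decay of $r_n$ and the total area $\sum\pi\rho_n^2$ so that $\mu$ is simultaneously non-landslide and non-constant-modulus. Beyond this bookkeeping, neither the Reich construction nor the mean value property argument of part (1) carries further subtlety.
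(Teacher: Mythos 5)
Your part (1) is correct, though it takes a different route from the paper: you impose radial oscillation $\mu(z)=ke^{i\theta(|z|)}$ and kill the pairing against $Q(\de)$ via the mean value property, reducing membership in $[0]_Z$ to the single scalar condition $\int_0^1 e^{i\theta(r)}r\,dr=0$, whereas the paper uses angular oscillation $\mu_n(z)=kz^n/|z|^n$ and the vanishing of the moments $\int_0^{2\pi}e^{in\phi}\varphi(re^{i\phi})\,d\phi$ for $n\geq 1$. Both are one-line orthogonality computations, and yours even yields an uncountable family.

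Part (2), however, has a genuine gap, and it is not the combinatorial bookkeeping you flag. First, the configuration you posit cannot exist: if the open disks $D_n=\de(z_n,\rho_n)$ are pairwise disjoint, their centers $\{z_n\}$ cannot be dense in $\de$ (a small disk inside $D_1$ avoiding $z_1$ contains no $z_m$ whatsoever), so the density argument you invoke for non-landslideness has nothing to stand on. Second, and more fundamentally, the differentials supplied by Lemma \ref{Th:reich} are the wrong local building blocks: each $\mu_n$ has modulus $k$ only on the inner disk $\de(z_n,r_n)$ and modulus $<\epsilon_n<k$ on the annulus $D_n\setminus\de(z_n,r_n)$. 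That annulus is a nonempty open set which, by disjointness, meets no other $D_m$; hence $\sigma(\mu,p)\leq\epsilon_n<k=\lnorm{\mu}$ for every $p$ in it, and your $\mu$ is of landslide type --- the opposite of what is claimed. The paper avoids this by using part (1) itself as the local ingredient: take a compact set $E\subset\de$ of positive measure and empty interior, write $\de\setminus E$ (up to a null set) as a disjoint union of round disks $B_n$, and on each $B_n$ transplant a null differential of constant modulus $k$ on \emph{all} of $B_n$. Then $|\mu|=k$ a.e.\ off $E$, so every neighborhood of every point has essential supremum $k$ because $E$ has empty interior, while $\mu\equiv0$ on the positive-measure set $E$ gives non-constant modulus. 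Replacing your Reich-type pieces by your own part (1) pieces, and replacing the condition $\sum\rho_n^2<1$ by the requirement that $\de\setminus\bigcup_n D_n$ be a positive-measure set with empty interior, repairs the argument and essentially reproduces the paper's proof.
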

\begin{proof} (1) Let $\mu_n(z)=k\frac{z^n}{|z|^n}$ for integer number $n\geq 1$.  It is easy to verify that $\mu_n\in [0]_Z$. It is obvious that each $\mu_n$ is non-landslide and has a constant modulus $k$.

(2) Let $E\subset \de$ be  a compact subset with positive measure and empty interior.
Since $\de\backslash E$ is a non-empty open subset of $\de$, we can choose two sequences $\{p_n\}$ in $\de\backslash E$ and
$\{r_n\}$ in $(0,1)$ respectively such that
\begin{equation*}
\de\backslash E=A\bigcup_{n=1}^\infty B_{r_n}(p_n)\; \text{ and \;}
B_{r_i}(p_i)\bigcap B_{r_j}(p_j)=\emptyset, \; i\neq j,
\end{equation*}  where  $A$ is a set of measure 0 and $B_{r_n}(p_n):=B_n$ denotes the round disk centered at $p_n$ with
radius $r_n$. Keep $\mu(z)=0$ on $E$. We apply (1) on
$B_{n}$ and then obtain infinitely many $\eta$ which are infinitesimally equivalent to $0|_{ B_n}$ on $B_n$ with modulus
$|\eta(z)|\equiv k$ on $B_n$. Modify $0|_{B_n}$ on every $B_n$
in such an equivalent way. Then we can find infinitely many Beltrami differentials $\mu\in [0]_Z$ such that each $\mu$ is non-landslide with  $\lnorm{\mu}=k$ but $\mu$ has not a constant modulus since $\mu(z)\equiv 0$ on $E$.

\end{proof}

\textbf{\emph{Proof of Theorem \ref{Th:infnonls}}}. Assume that the extremal in  $\zmu$ is not unique. We show that  $\zmu$ contains
infinitely many extremal Beltrami differentials of non-landslide type.

\textit{Case 1.}  $\mu$ contains an extremal of landslide
type.

By Lemma \ref{Th:lempseudo}, there is an extremal Beltrami differential in $\bmu$, say $\mu$, such that $LS(\mu)\neq \emptyset$ and $\mu(z)=0$ on $LS(\mu)$.
We start from $\mu$ to produce infinitely many extremal
Beltrami differentials of non-landslide type in $\zmu$.

 Since $LS(\mu)$ is a non-empty open subset of $S$,  we can choose two sequences $\{p_n\}$ in $LS(\mu)$ and
$\{r_n\}$ in $(0,1)$ respectively such that
\begin{equation*}
LS(\mu)=A\bigcup_{n=1}^\infty B_{r_n}(p_n)\; \text{ and \;}
B_{r_i}(p_i)\bigcap B_{r_j}(p_j)=\emptyset, \; i\neq j,
\end{equation*} where  $A$ is a set of measure 0 and  $B_{r_n}(p_n):=B_n$ denotes the round disk centered at $p_n$ with
radius $r_n$. Notice that $[\mu|_{ B_n}]_Z=[0|_{B_n}]_Z$  on every $B_{n}$ where we regard $[\mu|_{ B_n}]_Z$ as a point in the space $Z(B_n)$. We apply Lemma \ref{Th:nclass} on
$B_{n}$ and then find $\eta \in [\mu|_{ B_n}]_Z$ with modulus
$|\eta(z)|\equiv \nzmu$ on $B_n$. Modifying $\mu$ on every $B_n$
in such an equivalent way,  we then get infinitely many non-landslide extremals
in $\zmu$.

To obtain infinitely many non-landslide extremals of  non-constant modulus in $\zmu$,
we only need to modify $\mu|_B$ into $\eta|_B$ on  some disk $B$ in $LS(\mu)$, say $B=B_1$ such that $\eta|_B$ is of non-landslide type and  non-constant modulus with respect to the quantity $\nzmu$.
This can be done by applying Lemma   \ref{Th:nclass} (2).

\textit{Case 2.} Each extremal in
$\zmu$ is of non-landslide type (see \textbf{Remark} \ref{Th:rem2}).

If all extremals in $\zmu$ are  of  non-constant modulus, the proof is a fortiori.   Assume,  that $\zmu$ contains at least an extremal of constant modulus, say $\nu$.  Generally, it is known \cite{Re5,Yao1} that $\zmu$ contains at least an extremal of non-constant modulus unless $\zmu$ contains a unique extremal of constant modulus.  Say, $\mu$ is an extremal of non-constant modulus. Consider the Beltrami differentials curve $\eta_t=t\mu+(1-t)\nu$, $t\in [0,1]$. Then,  $\eta_t\in \zmu$ and $|\eta_t(z)|\leq t|\mu(z)|+(1-t)|\nu(z)|\leq \nzmu$ for all $t\in [0,1]$ and almost all $z\in S$. Hence every $\eta_t$ is  extremal in $\zmu$. It is clear that $\eta_t\neq\eta_s$  if $t\neq s$ in $[0,1]$. Moreover, for every $t\in (0,1)$, $\eta_t$ is non-landslide and of non-constant modulus. The concludes the proof of the theorem.

\begin{rem}\label{Th:rem2} It is an open problem whether there exists $\zmu$ such that the extremal in $\zmu$ is not unique and each extremal in  $\zmu$ is non-landslide.
\end{rem}

\begin{centering}\section{\!\!\!\!\!{. }On landslide set of an infinitesimal class}\label{S:appl}\end{centering}

Let  $\mu\in Bel(S)$.  Define the quantity
\begin{align*}
\sigma(\emb,p)=\inf\{\sigma(\eta,p):\, \eta \text{ is an extremal
 in } \emb \}
\end{align*}
to be the local dilatation of $\emb$ at $p\in S$.  If $\sigma(\bmu,p)=\norm{\mu}$, we call $p$ a non-landslide point of
$\bmu$; otherwise $p$ a landslide point of $\bmu$. The collection of all landslide points of $\bmu$ is called the landslide set of $\bmu$, 
denoted by  $LS(\bmu)$. $LS(\bmu)$ is clearly an open subset in $S$.

\begin{prop}\label{Th:p42}
Let  $\emb\in \zs$. Then for any $p\in S$, we have either
$\sigma(\emb,p)=0$ or $\sigma(\emb,p)=\|\emb\|$.
\end{prop}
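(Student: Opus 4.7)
The plan is to use Lemma \ref{Th:land} with $\alpha\equiv 0$ to convert any landslide at $p$ into an extremal that actually vanishes on a disk around $p$. This collapses the a priori range $[0,\norm{\emb}]$ of $\sigma(\emb,p)$ to its two endpoints.

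First I would record the trivial upper bound: for any extremal $\eta\in\emb$, the essential supremum estimate gives $\sigma(\eta,p)\leq\lnorm{\eta}=\norm{\emb}$, so $\sigma(\emb,p)\leq\norm{\emb}$. It therefore suffices to show that $\sigma(\emb,p)<\norm{\emb}$ forces $\sigma(\emb,p)=0$.

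Assume $\sigma(\emb,p)<\norm{\emb}$. By the definition of $\sigma(\emb,p)$ as an infimum over extremals in $\emb$, there is an extremal $\eta_0\in\emb$ with $\sigma(\eta_0,p)<\lnorm{\eta_0}=\norm{\emb}$, which means precisely that $p\in LS(\eta_0)$ and in particular $LS(\eta_0)\neq\emptyset$. I would then apply Lemma \ref{Th:land} to $\eta_0$ with $\alpha\equiv 0$ (which trivially satisfies $\lnorm{\alpha}\leq\lnorm{\eta_0}$) and $\zeta=p$: this produces a disk $\de(p,r)\subset LS(\eta_0)$ and a Beltrami differential $\nu\in[\eta_0]_Z=\emb$ with $\nu\equiv 0$ on $\de(p,r)$ and $\lnorm{\nu}=\lnorm{\eta_0}=\norm{\emb}$. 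Hence $\nu$ is an extremal representative of $\emb$, and because $\nu$ vanishes on the open neighborhood $\de(p,r)$ of $p$, we have $\sigma(\nu,p)=0$. Therefore $\sigma(\emb,p)\leq\sigma(\nu,p)=0$, completing the dichotomy.

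There is no substantial obstacle: the whole argument is immediate once the ``zero-out on a small disk'' operation of Lemma \ref{Th:land} is available, which in turn rests on the Reich-type extension of Lemma \ref{Th:reich}. The only point worth double-checking is that the resulting $\nu$ is still extremal (so that it is a legal competitor in the infimum defining $\sigma(\emb,p)$), and this is exactly the equality $\lnorm{\nu}=\lnorm{\eta_0}=\norm{\emb}$ delivered by Lemma \ref{Th:land}.
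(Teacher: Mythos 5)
Your proof is correct and follows essentially the same route as the paper: both reduce the dichotomy to applying Lemma \ref{Th:land} with $\alpha\equiv 0$ to an extremal that landslides at $p$, producing an extremal representative of $\emb$ vanishing on a disk about $p$. If anything, your version is slightly cleaner, since you invoke Lemma \ref{Th:land} globally exactly as stated, whereas the paper restates the restriction-to-a-disk step inside the proof.
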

\begin{proof}
Suppose  $\sigma(\bmu,p)<\norm{\mu}$. Then there exists some extremal
in $\emu$, say $\mu$,  and  some small disk $B_r$ in $S$ with center
$p$ and radius $r$ such that $\esssup_{z\in B_r}|\mu(z)|<\norm{\mu}$.
Restricted on $B_r$, there exists some $\eta\in [\mu|_{B_r}]$ such
that $\|\eta\|_\infty<\norm{\mu}$ and $\eta(z)\equiv0$ on
some small disk $B_\delta$  in
$B_r$  with center $p$ and radius $\delta<r$  by Lemma \ref{Th:land}. This gives
$\sigma(\bmu,p)=0$.
\end{proof}

A Beltrami differential $\mu$ is called to be locally extremal if
for any subdomain  $G\subset S$, it is extremal in its class in
$Z(G)$. The notion of \emph{locally extremal} was first introduced
by Sheretov in \cite{Shr}.

 The
following three conditions are equivalent:\\
(i) all extremals in $\bmu$ are locally extremal;\\
(ii) each extremal in $\bmu$ is non-landslide;\\
(iii)  $\sigma(\bmu,p)=\norm{\mu}$ for all $p\in S$, i.e. $LS(\bmu)=\emptyset$.

There is no essential relation among
local extremals and non-landslide extremals. In fact, a local extremal
can be of landslide type due to the construction in Theorem 1 of
\cite{Yao2}. The example of a  local extremal (not a unique extremal)
of constant modulus was first given by Reich in \cite{Re3}.

The following theorem says that every component of $LS(\bmu)$ has no holes.
\begin{theorem}\label{Th:sig1}
Let $\emb\in \zs$. Then every component of $LS(\emb)$
 is simply-connected.
\end{theorem}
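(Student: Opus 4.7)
The plan is to argue by contradiction: assume some component $\Omega$ of $LS(\bmu)$ is not simply connected, so there exists a Jordan curve $\gamma \subset \Omega$ whose bounded Jordan interior $U$ satisfies $U \not\subset \Omega$. Since $\Omega$ is open and contains $\gamma = \pa U$, an open neighborhood of $\gamma$ in $\Omega$ reaches into $U$, so $\Omega \cap U \neq \emptyset$. Hence it suffices to prove $U \subset LS(\bmu)$; combined with connectedness of $U$ and the fact that $\Omega$ is a component, this will force $U \subset \Omega$, contradicting the choice of $\gamma$.

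First I would produce an extremal $\nu \in \bmu$ whose modulus is uniformly bounded away from $k := \|\bmu\|$ on an open neighborhood of $\gamma$. For each $p \in \gamma$, the hypothesis $p \in LS(\bmu)$ together with Proposition \ref{Th:p42} yields an extremal $\eta_p \in \bmu$ and a disk $\de(p, r_p) \subset S$ with $\esssup_{\de(p, r_p)} |\eta_p| < k$. Compactness of $\gamma$ extracts a finite subcover $B_{p_1}, \dots, B_{p_m}$. The convex combination $\nu := \frac{1}{m}\sum_{i=1}^m \eta_{p_i}$ remains in $\bmu$ and is still extremal (its norm cannot drop below $k$), and an elementary estimate shows that on the open neighborhood $V := \bigcup_i B_{p_i}$ of $\gamma$ one has $|\nu(z)| \leq k - \delta$ almost everywhere for some uniform $\delta > 0$.

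The crucial and hardest step is to prove $\|[\nu|_U]_{Z(U)}\| < k$. Assume instead that it equals $k$. Then one may select $\vp_n \in Q^1(U)$ with $Re\iint_U \nu \vp_n\, dxdy \to k$, and the bound $|\nu| \leq k - \delta$ on $V \cap U$ forces $\iint_{V \cap U}|\vp_n| \to 0$. Since $|\vp_n|$ is subharmonic with $L^1(U)$-norm one, the sub-mean-value inequality provides a uniform bound on every compact subset of $U$, and Montel's theorem produces a subsequence converging locally uniformly on $U$ to a holomorphic $\vp$. The concentration $\iint_{V \cap U}|\vp_n|\to 0$ forces $\vp \equiv 0$ on the open set $V \cap U$, so $\vp \equiv 0$ throughout the connected Jordan domain $U$ by the identity theorem. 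The key topological point is that, because $\gamma \subset V$, we have $\ov{U} \setminus V = U \setminus V$, which is therefore a compact subset of $U$; on it the subsequence converges uniformly to $0$, so $\iint_{U \setminus V}|\vp_n| \to 0$. Combined with the bound on $V \cap U$, this gives $\iint_U|\vp_n| \to 0$, contradicting $\|\vp_n\|_{L^1(U)} = 1$.

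Once this inequality is established, choose a representative $\wt\nu \in [\nu|_U]_{Z(U)}$ with $\|\wt\nu\|_\infty < k$, and paste: set $\nu^*(z) = \wt\nu(z)$ on $U$ and $\nu^*(z) = \nu(z)$ on $S \setminus U$. Since $\wt\nu - \nu|_U \in \mcn(U)$ and $\vp|_U \in Q(U)$ for every $\vp \in Q(S)$, extending the difference by zero places it in $\mcn(S)$, so $\nu^* \in \bmu$. As $\|\nu^*\|_\infty \leq k$, it is an extremal representative, and $|\nu^*| < k$ everywhere on $U$ gives $\sigma(\nu^*, q) < k$ for every $q \in U$; that is, $U \subset LS(\nu^*) \subset LS(\bmu)$, completing the contradiction.
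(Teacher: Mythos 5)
Your proof is correct and takes essentially the same route as the paper's: cover the separating Jordan curve by finitely many disks on which some extremal representative has modulus bounded away from $k$, average to obtain a single extremal $\nu$ that is uniformly small on a neighborhood of the curve, show $\nu$ fails to be extremal on the enclosed Jordan domain, and paste in a strictly smaller representative to conclude that the enclosed domain lies in $LS(\bmu)$, yielding the contradiction. The only real difference is that you supply in full the normal-families/degenerating-sequence argument for the non-extremality step, which the paper compresses into ``one easily shows that $\eta$ is not extremal restricted on $\wt J$,'' and both arguments rest on the same tacit assumption that the Jordan interior of the chosen curve is contained in $S$.
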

\begin{proof}If $LS(\emb)=\emptyset$, the theorem is a fortiori.
Suppose $LS(\emb)\neq\emptyset$.  Assume, by contradiction, that
some component $D$ of $LS(\emb)$ has holes. Since $LS(\emb)$ is
open, we can choose a Jordan curve $C$ in $D$ such that the Jordan
domain $J$ contoured by $C$ is contained in $S$ and  there is at
least a hole denoted by $H$ located in $J$. Notice that the hole $H$
is actually a connected component of $S\backslash LS(\emb)$.

For any point $p\in C$, by Proposition \ref{Th:p42}, there exists
some extremal $\nu_p$ in $\emb$ such that $\nu_p(z)\equiv 0$ on a
small disk $B_{r_p}(p)$ in $D$ with center $p$ and radius $r_p$.
When $p$ varies along $C$, all such $B_{r_p}(p)$ cover the curve
$c$. By the finite covering theorem, we can choose finite disks
$B_{r_{p_j}}(p_j)$ ($j=1,2\cdots,n$) covering $C$. Let $\nu_{p_j}$
($j=1,2\cdots,n$) be these corresponding extremals. Put
\begin{equation*}
\eta=\frac{\nu_{p_1}+\nu_{p_2}+\cdots+\nu_{p_n}}{n}.
\end{equation*}
Then $\eta\in\emb $ and $\eta$ is an extremal. In particular, we see
that $\sigma(\eta,p)\leq \frac{n-1}{n}\|\emb\|$ for all points $p$
in $R=\bigcup_{j=1}^{n}B_{r_{p_j}}(p_j)$. Set $\wt J=J\bigcup R$.
Then $\wt J$ is also a Jordan domain in $S$. Now, one easily shows
that $\eta$ is not extremal restricted on $\wt J$ and hence
$\sigma(\eta,p)<\|\emb\|$ for all $p \in \wt J$. Whereas, for $p\in
H$, we have $\sigma(\zmu,p)=\|\emb\|$, which  is a desired
contradiction.
\end{proof}

\renewcommand\refname{\centerline{\Large{R}\normalsize{EFERENCES}}}
\medskip

\noindent \textit{Guowu Yao}\\
Department of Mathematical Sciences\\
  Tsinghua University\\Beijing,  100084,  People's Republic of
  China \\
  E-mail: \texttt{gwyao@math.tsinghua.edu.cn}

\begin{thebibliography}{99}

\bibitem{BLMM} V. Bo\v{z}in,  N. Lakic  and  V. Markovi\'{c}
 and  M. Mateljevi\'{c}, \emph{Unique extremality,} J. Anal.  Math.
 75 (1998), 299-338.


\bibitem{Fan} J. Fan, \emph{ On extremal quasiconformal mappings of non-landslide type,} Proc. Amer. Math. Soc. 139 (2011), no. 8, 2729-2733.

\bibitem{GL}F. P. Gardiner and   N. Lakic,  \emph{Quasiconformal
\T Theory,}   Mathematical Surveys and Monographs, Vol. 76, Amer. Math. Soc. Providence,  RI,  2000.



 \bibitem{Li5}Z. Li,   \emph{A note on extremal \qc mappings,   }Sci. China,  Ser. A,
 53 (2010),  63-70.


\bibitem{Re2}E. Reich, \emph{An extremum problem for analytic
functions with area norm,} Ann. Acad. Sci. Fenn. Ser. A. I. Math. 2
(1976), 429-445.

\bibitem{Re3}E. Reich, \emph{On the uniqueness question for
Hahn-Banach extensions from the space of $L^1$ analytic functions,}
Proc. Amer. Math. Soc. 88 (1983), 305-310.

\bibitem{Re5}E. Reich, \emph{Non-uniquely extremal \qc mappings, }
Libertas Mathematica, 20 (2000), 33-38.



\bibitem{Shr}V. G. Sheretov, \emph{ Locally extremal \qc mappings,}
Soviet Math. Dokl. 21 (1980), 343,345.

 \bibitem{St6}K. Strebel, \emph{Point shift differentials and
extremal \qc mappings, } Ann. Acad. Sci. Fenn. Math.   23 (1998),
475-494.





\bibitem{Yao1}G. W. Yao and Y. Qi, \emph{On the  modulus of
 extremal Beltrami coefficients},
J. Math. Kyoto Univ.  46 (2006), 235-247.


\bibitem{Yao2}G. W. Yao, \emph{Unique extremality, local extremality and extremal non-decreasable dilatations,} Bull. Austral. Math. Soc. 75 (2007), 321-329.


\bibitem{Yao3}G. W. Yao, \emph{Existence of extremal Beltrami coefficients with non-constant modulus,}
Nagoya Math. J. 199 (2010), 1-14.


\bibitem{Yao5}G. W. Yao, \emph{Extremal Beltrami differentials of non-landslide type,} Proc. Edinb. Math. Soc.  to appear.



\end{thebibliography}
\end{document}